
\documentclass[12pt]{article}

\usepackage{amssymb}
\usepackage{amsmath}
\usepackage{mathdots}
\usepackage{amsbsy}
\usepackage{amscd}
\usepackage{amsthm}
\usepackage{mathrsfs}
\usepackage{verbatim}
\usepackage[colorlinks]{hyperref}
\usepackage{fullpage}

\usepackage{url}

\usepackage[english]{babel}
\usepackage[T1]{fontenc}
\usepackage[utf8]{inputenc}
\usepackage{tikz}
\usepackage{authblk}

\theoremstyle{definition}
\newtheorem{Theorem}{Theorem}[section]
\newtheorem{Lemma}[Theorem]{Lemma}

\newtheorem{Remark}[Theorem]{Remark}

\newtheorem{Proposition}[Theorem]{Proposition}


\newcommand{\R}{\mathbb{R}}

\newcommand{\Z}{\mathbb{Z}}

\newcommand{\mt}[1]{\text{#1}}


\begin{document}

\title{The Cross-section of a Spherical Double Cone}
\author{Mahir Bilen Can}
\affil{{\small mahirbilencan@gmail.com}}
\normalsize

\date{\today}
\maketitle

\begin{abstract}
We show that the poset of $SL(n)$-orbit closures in the product of two partial flag varieties 
is a lattice if the action of $SL(n)$ is spherical. 
\vspace{.5cm}

\noindent 
\textbf{Keywords:} Spherical double cones, partial flag varieties, ladder posets.\\ 
\noindent 
\textbf{MSC: 06A07, 14M15} 
\end{abstract}

\maketitle

\section{Introduction}\label{S:Introduction}

Let $G$ be a connected reductive algebraic group.
A normal algebraic variety $X$ is called a spherical 
$G$-variety if there exists an algebraic action 
$G\times X\rightarrow X$ such that the restriction 
of the action to a Borel subgroup $B$ of $G$ has 
an open orbit in $X$. In this case, we say that 
the action is spherical. 

\vspace{.25cm}

Let $P_1,\dots, P_k \subset G$ be a list of parabolic 
subgroups containing the same Borel subgroup $B$ and let 
$X$ denote the product variety $X=G/P_1\times \cdots \times G/P_k$.
Then $X$ is a smooth, hence normal, $G$-variety via 
the diagonal action. 
The study of functions on an affine 
cone over $X$ is important for understanding the 
decompositions of tensor products of 
representations of $G$, see~\cite{PopovVinberg,Panyushev99}.
In particular, determining when the diagonal action of $G$ on $X$ is 
spherical is important for understanding the multiplicity-free
representations of $G$.
In his ground breaking article~\cite{Littelmann}, 
Littelmann initiated the classification problem 
and gave a list of all possible pairs of maximal parabolic 
subgroups $P_1,P_2$ such that $G/P_1\times G/P_2$ 
is a spherical $G$-variety.
In~\cite{MWZ1}, for group $G=SL(n)$ and in~\cite{MWZ2} 
for $G=Sp(2n)$, Magyar, Weyman, and Zelevinski classified
the parabolic subgroups $P_1,\dots, P_k$ such that the 
product $X=G/P_1\times \cdots \times G/P_k$
is a spherical $G$-variety. According to~\cite{MWZ1}, 
if $X$ is a spherical $G$-variety, then the number of factors is at most 3, 
and $k=3$ occurs in only special cases. 
Therefore, the gist of the problem lies in the case $k=2$. 
This case is settled in full detail by Stembridge. In~\cite{Stembridge},
for a semisimple complex algebraic group $G$, 
Stembridge listed all pairs of parabolic 
subgroups $(P_1,P_2)$ such that $G/P_1\times G/P_2$ is a 
spherical $G$-variety.

\vspace{.5cm}

For motivational purposes, we will mention some recent related developments. 
Let $K$ be a connected reductive subgroup of $G$ and let 
$P$ be a parabolic subgroup of $G$. One of the major
open problems in the classification of spherical actions is the following: 
What are the possible triplets $(G,K,P)$ 
such that $G/P$ is a spherical $K$-variety? 
When $K$ is a Levi subgroup of a parabolic 
subgroup $Q$, this question is equivalent to 
asking when $G/Q\times G/P$ is a spherical $G$-variety
via diagonal action; it has a known solution as we mentioned earlier.
For an explanation of this equivalence, see~\cite[Lemma 5.4]{AP}. 
In~\cite{AP}, Avdeev and Petukhov gave a complete 
answer to the above question in the case $G=SL(n)$.  
If we assume that $K$ is a symmetric subgroup of $G$, 
then our initial question is equivalent to 
asking when $G/P\times K/B_K$ has an open $K$-orbit
via its diagonal action. Here, $B_K$ is a Borel subgroup of $K$. 
In this case, the answer is recorded in~\cite{Heetal}. 
See also the related work of Pruijssen~\cite{VanPruijssen}.
Finally, let us mention another extreme situation where 
the answer is known: 
$G$ is an exceptional simple group, 
$P$ is a maximal parabolic subgroup, and 
$K$ is a maximal reductive subgroup of $G$,
see~\cite{Niemann}.

\vspace{.5cm}

We go back to the products of flag varieties and let 
$P$ and $Q$ be two parabolic subgroups from $G$. 
From now on we will call a product variety of the form 
$G/P\times G/Q$ a double flag variety. If the diagonal 
action of $G$ on a double flag variety 
$X=G/P\times G/Q$ is spherical, then we will call 
$X$ a spherical double flag variety for $G$. 
As it is shown by Littelmann in his previously
mentioned article, 
the problem of deciding if a double flag variety is 
spherical or not is closely related to a study of the 
invariants of a maximal unipotent subgroup in the 
coordinate ring of an affine cone over $X$.
In turn, this study is closely related to the 
combinatorics of the $G$-orbits in $X$. 
In this regard, our goal in this note is to 
prove the following result on the poset of 
inclusion relationships between the $G$-orbit closures
in a spherical double flag variety.

\begin{Theorem}\label{T:main}
Let $G$ denote the special linear group $SL(n+1)$.
If $X$ is a spherical double flag variety for $G$, then the 
poset of $G$-orbit closures in $X$ is a lattice. 
\end{Theorem}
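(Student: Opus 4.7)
The plan is to combine the Magyar--Weyman--Zelevinski (MWZ) classification of spherical double flag varieties for $G=SL(n+1)$, cited in the introduction, with an explicit combinatorial description of the $G$-orbit closures. By MWZ, the pairs $(P,Q)$ for which $X=G/P\times G/Q$ is spherical form a short, explicit list indexed by pairs of compositions $(\alpha,\beta)$ of $n+1$ (subject to additional shape constraints). For each family on that list, I would first set up a combinatorial model for the $G$-orbits on $X$. Via the standard bijection $G\backslash (G/P\times G/Q) \cong P\backslash G/Q$, every orbit corresponds to a nonnegative integer matrix whose row-sum vector is $\alpha$ and whose column-sum vector is $\beta$; in the spherical cases these matrices satisfy further rank/support constraints, which is precisely what forces the orbit set to be finite.

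The second step is to describe the closure order on orbits in terms of these matrices. The expectation, motivated by analogous results on $B$-orbit closures in Schubert varieties and in symmetric spaces, is that the closure order coincides with the degeneration order on MWZ matrices. After applying the partial-sum (\emph{cumulant}) transformation $M\mapsto \widetilde M$ with $\widetilde M_{ij}=\sum_{k\le i,\ \ell\le j} M_{k\ell}$, this order becomes entrywise comparison, and so the orbit poset embeds in a product of chains whose meet and join are entrywise min and max.

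The third step is to verify that the image of the orbit set under the cumulant transformation is closed under entrywise min and max, i.e.\ that the meet and join of two cumulant matrices still satisfy the shape constraints of the relevant MWZ family. If so, the orbit poset is a finite distributive sublattice of a product of chains, and in each MWZ family it takes the form of the lattice of order ideals of a ladder-shaped subposet of a rectangle---these are the ``ladder posets'' of the keywords.

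The main obstacle is this last verification. For the Grassmannian-type families, where $P$ and $Q$ are maximal parabolics, the cumulant image is the set of Young diagrams fitting inside a rectangle and the lattice statement reduces to a well-known fact about Young's lattice. In the remaining MWZ families the constraints are more intricate (involving rank patterns across several blocks), and one must argue, either by hand or via a uniform ladder-theoretic lemma, that the meet and join preserve these constraints. I expect the bulk of the technical work to lie in this case analysis, and a clean proof would likely rely on extracting a single combinatorial criterion on $(\alpha,\beta)$ that forces the cumulant image to be a ladder poset.
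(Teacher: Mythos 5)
Your overall strategy (orbits $\leftrightarrow$ transportation matrices, closure order $\leftrightarrow$ entrywise dominance of the northwest partial-sum matrices, lattice property $\leftrightarrow$ closure of the set of partial-sum matrices under entrywise min and max) is a legitimate and genuinely different route from the paper, which instead reduces everything to the Bruhat--Chevalley order on the maximal-length double coset representatives $X_{I,J}^+$, disposes of most cases with Stembridge's tight-quotient theorem, and settles the remaining three families by explicit one-line-notation computations that produce the ladder Hasse diagrams. But as written your proposal has two problems. The first is a conceptual slip: the $G$-orbits on $G/P\times G/Q$ are \emph{always} finite in number --- they are the $(W_I,W_J)$-double cosets, i.e., \emph{all} nonnegative integer matrices with row sums $\alpha$ and column sums $\beta$, with no further rank or support constraints. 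Sphericity does not shrink the set of $G$-orbits; it only restricts which pairs $(\alpha,\beta)$ occur at all (for $SL(n+1)$, at least one of the two parabolics must be maximal, etc.). What sphericity controls is the number of $B$-orbits, not of $G$-orbits. So the object you must analyze is the full transportation set for each admissible pair of margins.

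The second and decisive problem is that the only nontrivial step --- verifying that for each spherical pair $(\alpha,\beta)$ the set of cumulant matrices is closed under entrywise min and max --- is exactly the content of the theorem, and you defer it entirely. This cannot be waved through: for general margins the transportation poset is \emph{not} a lattice (already for $\alpha=\beta=(1,1,1)$ it is the Bruhat order on $S_3$, where $s_1$ and $s_2$ have two incomparable minimal upper bounds), so a case analysis tied to the spherical classification is unavoidable, and it is where the paper spends all of Section~\ref{S:main} in the equivalent language of permutations. Until that verification is carried out family by family, you have a plan rather than a proof. A smaller point: the identification of the closure order with entrywise dominance of partial sums is stated only as an ``expectation''; it is true (the Ehresmann-type criterion for $P\backslash G/Q$ in type A) but should be cited or proved, since the whole argument rests on it.
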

In fact, we have a precise 
description of the possible lattices in Theorem~\ref{T:main}.
It turns out that the Hasse diagram of such a lattice look like 
a ``ladder'', or the lattice is a chain, see Theorem~\ref{T:Type A G orbits}.

\vspace{.5cm}

The structure of our paper is as follows. 
In the next section we set up our notation 
and review some basic facts about 
the double-cosets of parabolic subgroups. 
In Subsection~\ref{SS:2}, we show that 
the inclusion poset of $G$-orbit closures in 
$G/P\times G/Q$ is isomorphic to the inclusion poset 
of $P$-orbit closures in $G/Q$. 
In Subsection~\ref{SS:4} we review
the concept of tight Bruhat order
due to Stembridge. We use the information
gained from this subsection in our analysis 
of the cases that are considered in the subsequent 
Section~\ref{S:main}, where we prove 
our main result.

\section{Preliminaries}\label{S:Preliminaries}

\subsection{}\label{SS:1}

For simplicity, let us assume that $G$ is a
semisimple simply-connected complex 
algebraic group and let $B$ be a Borel subgroup of $G$.
Let $T$ be a maximal torus of $G$ that is contained in $B$.
The unipotent radical of $B$ is denoted by $U$, so that 
$B=UT$. We denote by $\Phi$ the root system corresponding
to the pair $(G,T)$ and we denote by $\Delta$ the subset of 
simple roots corresponding to $B$. 
A parabolic subgroup $P$ of $G$ is said to be standard
with respect to $B$ if the inclusion $B\subseteq P$ holds
true. In this case, $P$ is uniquely determined by a 
subset $I \subseteq \Delta$.

\vspace{.25cm}

The Weyl group of $(G,T)$, that is $N_G(T)/T$, 
is denoted by $W$
and we use the letter $R$ to denote the 
set of simple reflections $s_\alpha \in W$, 
where $\alpha \in \Delta$. 
We will allow ourselves be confused
by using the letter $I$ (and $J$) to denote a 
subset of $\Delta$ or the corresponding
subset of simple reflections in $R$. 
The {\em length} of an element $w\in W$,
denoted by $\ell(w)$, is the 
minimal number of Coxeter generators
$s_i\in R$ that is needed for the equality  
$w = s_1\cdots s_k$ hold true.
In this case, the product $s_1\cdots s_k$
is called a reduced expression for $w$. 
Note that when $W$ is the symmetric group
of permutations $S_{n+1}$, the length
of a permutation $w=w_1\dots w_{n+1}\in S_{n+1}$ is 
equal to the number of pairs $(i,j)$ with 
$1\leq i < j \leq n+1$ and $w_i > w_j$.

\vspace{.25cm}

The Bruhat-Chevalley order on $W$ can be defined 
by declaring $v\leq w$ $(w,v\in W)$ 
if a reduced expression of $v$ is obtained
from a reduced expression $s_1\cdots s_k$ of $w$ by
deleting some of the simple reflections $s_i$.

\vspace{.25cm}

Let $X(T):=\mt{Hom}(T,\mathbb{G}_m)$ denote the group of characters
of the maximal torus $T$. 
Let $\{ \omega_1,\dots, \omega_r\}$ 
denote the set of fundamental 
weights corresponding to the set of simple roots 
$\Delta =\{\alpha_1,\dots, \alpha_r \}$. 
By our assumptions on $G$, we have $\omega_i \in X(T)$ for every $i\in \{1,\dots, r\}$.
The Weyl group $W$ acts on the weight lattice, that is $X(T)$. 
Let $\mathbf{E}$ denote the real vector 
space that is spanned by the fundamental
weights, so that $\mathbf{E} = X(T)\otimes_\Z \R$.
The action of $W$ on the weight lattice extends to give 
a linear action on $\mathbf{E}$.
A vector $\theta$ from $\mathbf{E}$
is called a dominant vector if it is of the form
$\theta = a_1 \omega_1 +\cdots + a_r \omega_r$,
where $a_i$'s are nonnegative real numbers. 
Let $W(\omega_i)$ ($i\in \{1,\dots, r\}$)
denote the isotropy group of $\omega_i$ in $W$.
Then $W(\omega_i)$ ($i\in \{1,\dots, r\}$) is a parabolic subgroup of $W$,
and furthermore, the subgroup of $G$ that is generated by $B$ 
and $W(\omega_i)$ is a maximal parabolic subgroup.

\subsection{}\label{SS:2}

Let $G$ act on two irreducible varieties 
$X_1$ and $X_2$, and let 
$x_i \in X_i$, $i=1,2$ be two points 
in general positions. If $G_i \subset G$ denotes 
the stabilizer subgroup of $x_i$ in $G$, then 
$\mt{Stab}_G(x_1\times x_2)$ coincides with 
the stabilizer in $G_1$ of a point in general 
position from $G/G_2$ (or, equivalently, with 
the stabilizer in $G_2$ of a point in general position from $G/G_1$),
see~\cite{Panyushev99}.

\vspace{.25cm}

Let $P_1$ and $P_2$ be two parabolic subgroups of $G$. 
By applying the idea from the previous paragraph
to the double flag variety $X:= G/P_1\times G/P_2$,
where $B\subset P_1\cap P_2$, 
we notice that the study of $G$-orbits in $X$ 
reduces to the study of $P_1$-orbits in the flag
variety $G/P_2$. 
But more is true; 
this correspondence between $G$-orbits 
and $P_1$-orbits respects the inclusions
of their closures in Zariski topology.

\begin{Lemma}\label{L:poset isom}
The poset of $G$-orbit closures in $X$ is 
isomorphic to the poset of $P_2$-orbit closures
in $G/P_1$.
\end{Lemma}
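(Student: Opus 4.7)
The plan is to realize $X=G/P_1\times G/P_2$ as an associated bundle over $G/P_2$. Specifically, I would introduce the $G$-equivariant isomorphism
\[
\varphi : G\times^{P_2}(G/P_1)\;\longrightarrow\; G/P_1\times G/P_2,\qquad [g,xP_1]\longmapsto(gxP_1,\,gP_2),
\]
where $G$ acts on the left-hand side by left multiplication on the first factor, and diagonally on the right-hand side; here $P_2$ acts on $G/P_1$ by left translation. Under $\varphi$ the projection to $G/P_2$ becomes the bundle projection $\pi:G\times^{P_2}(G/P_1)\to G/P_2$, which is a locally trivial fiber bundle with fiber $G/P_1$, and the fiber over $eP_2$ inherits its natural $P_2$-action.

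From this description, the assignment
\[
\Phi(\mathcal{O}'):=G\times^{P_2}\mathcal{O}' \;\subseteq\; X
\]
sends a $P_2$-orbit $\mathcal{O}'\subseteq G/P_1$ to a $G$-orbit in $X$, and every $G$-orbit arises this way exactly once, the inverse being $\mathcal{O}\mapsto\{xP_1:(xP_1,eP_2)\in\mathcal{O}\}$. This is the bijection that one predicts from the general-position stabilizer observation recalled at the start of Subsection~\ref{SS:2}: indeed, $\mt{Stab}_G(xP_1,eP_2)=xP_1x^{-1}\cap P_2$ is precisely the stabilizer in $P_2$ of the point $xP_1\in G/P_1$.

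It remains to show that $\Phi$ is an isomorphism of posets, i.e. that it exchanges inclusions of closures. For any $P_2$-stable subvariety $Y\subseteq G/P_1$, I would verify the identity
\[
\overline{\,G\times^{P_2}Y\,}\;=\;G\times^{P_2}\overline{Y},
\]
where the closures are taken in $X$ and $G/P_1$ respectively. Because $\pi$ is a locally trivial fiber bundle, this is a local statement on $G/P_2$: over a trivializing open $U\subseteq G/P_2$ the left-hand side becomes $\overline{U\times Y}=U\times\overline{Y}$, which matches the restriction of the right-hand side. Applying this identity to the two $P_2$-orbits $\mathcal{O}_1',\mathcal{O}_2'\subseteq G/P_1$ shows that $\overline{\Phi(\mathcal{O}_1')}\subseteq\overline{\Phi(\mathcal{O}_2')}$ if and only if $\overline{\mathcal{O}_1'}\subseteq\overline{\mathcal{O}_2'}$.

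The only step that is not purely formal is the closure-commutation identity displayed above; this is the place where one actually uses the geometry (smoothness and local triviality of $\pi$) rather than just the abstract bijection. Once it is established, the lemma follows immediately, and in particular the bijection $\Phi$ is automatically order-preserving in both directions.
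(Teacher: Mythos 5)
Your argument is correct and is essentially the paper's own proof in different clothing: both exploit the homogeneous fiber bundle structure of $X$ over $G/P_2$ with fiber $G/P_1$, the paper passing from a $G$-orbit $O$ to $\overline{O}\cap Y$ where $Y$ is the (closed) fiber over $eP_2$, while you go in the opposite direction via $\mathcal{O}'\mapsto G\times^{P_2}\mathcal{O}'$. The one point you treat more explicitly than the paper is the compatibility of closures (your identity $\overline{G\times^{P_2}Y}=G\times^{P_2}\overline{Y}$ via local triviality), which the paper essentially asserts from the closedness of $Y$; this is a welcome clarification but not a different method.
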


\begin{proof}
Let $X$ denote $G/P_1 \times G/P_2$. 
The canonical projection $\pi : X\rightarrow G/P_2$
is $G$-equivariant and it turns $X$ into a 
homogenous fiber bundle over $G/P_2$ with fiber $G/P_1$
at every point $gP_2$ ($g\in G$)
of the base $G/P_2$. 
To distinguish it from the other fibers,
let us denote by $Y$ the fiber $G/P_1$ 
at the `origin' $eP_2$ of $G/P_2$. Then 
any $G$-orbit in $X$ meets $Y$.
Note also that if $g\cdot y \in Y$
for some $g\in G$ and $y\in X$, then 
$g\in P_2$. 
There are two useful consequences 
of this observation;
1) $Y$ is a $P_2$-variety;  
2) any $G$-orbit 
$O$ meeting $Y$, actually meets $Y$ 
along a $P_2$-orbit. 
Therefore, the map $O\mapsto O\cap Y$
gives a bijection between the set
of all $G$-orbits in $X$ and the set 
of all $P_2$-orbits in $Y$. 

Since $G$ and $P_2$ are connected
algebraic groups, the Zariski closures 
of their orbits are irreducible. Furthermore, 
the boundaries of the orbit closures 
are unions of orbits of smaller dimensions. 
At the same time, $Y$ is closed in $X$, 
therefore, the extension of the 
orbit-correspondence 
map,
\begin{align}
\overline{O} \longmapsto \overline{O}\cap Y,
\end{align}
gives a poset isomorphism between 
the inclusion orders on the Zariski 
closures of $G$-orbits in $X$ 
and the Zariski closures of $P_2$-orbits in $Y$.
This finishes the proof of our lemma.
\end{proof}

\begin{Remark}
By looking at the $(P_1,P_2)$-double cosets
in $G$, as far as the combinatorics of 
orbit closures is concerned, 
we see that there is no real difference
between the study of $P_1$-orbits in $G/P_2$
and the study of $P_2$-orbits in $G/P_1$. 
\end{Remark}

\subsection{}\label{SS:3}

We preserve our assumptions/notation from the previous 
subsections; $P_1$ and $P_2$ are two 
standard parabolic subgroups with respect to $B$. 
If $I$ and $J$ are the subsets of $R$ (or, of $\Delta$) 
that determine $P_1$ and $P_2$, respectively, 
then we will write $P_I$ (resp. $P_J$) in place
of $P_1$ (resp. $P_2$).
The Weyl groups of $P_I$ and $P_J$ 
will be denoted by $W_I$ and $W_J$, respectively. 
In this subsection, we will present some 
well-known facts regarding the set of 
$(W_I,W_J)$-double cosets in $W$, 
denoted by $W_I \backslash W / W_J$.

\vspace{.25cm}

First of all, the set $W_I \backslash W / W_J$
is in a bijection with the set of $P_I$-orbits in $G/P_J$,
see~\cite[Section 21.16]{Borel}.
For $w\in W$, we denote by $[w]$ 
the double coset $W_I w W_J$. 
Let 
$$
\pi : W \rightarrow W_1\backslash W / W_2
$$ 
denote the canonical projection onto the set of 
$(W_1,W_2)$-double cosets. 
It turns out that the preimage in $W$ of every double coset 
in $W_1\backslash W / W_2$ is an interval with 
respect to Bruhat-Chevalley order, 
hence it has a unique maximal and a 
unique minimal element, see~\cite{Curtis85}.
Moreover, if $[w], [w']\in W_1\backslash W / W_2$ 
are two double cosets, $w_1$ and $w_2$ are the maximal 
elements of $[w]$ and $[w']$, respectively, 
then 
$w \leq w'$ if and only if $w_1 \leq w_2$,
see~\cite{HohlwegSkandera}.
It follows that $W_1\backslash W / W_2$ has 
a natural combinatorial partial ordering 
defined by 
$$
[w] \leq [w'] \iff w \leq w' \iff w_1 \leq w_2 
$$
where $[w],[w'] \in W_1\backslash W / W_2$ and $w_1$ and 
$w_2$ are the maximal 
elements, $w_1 \in [w]$ and $w_2 \in [w']$.
This partial order is geometric in the following sense; 
if $O_1$ and $O_2$ are two $P_I$-orbits in $G/P_J$
with the corresponding double cosets $[w_1]$ and $[w_2]$,
respectively, then $O_1 \subseteq \overline{O_2}$
if and only if $w_1 \leq w_2$. 
The bar on $O_2$ stands for the Zariski closure in $G/P_J$.

\vspace{.25cm}

Now let $[w]$ be a double coset from ${}^{I}{W} \cap W^J$
represented by an element $w\in W$ such that 
$\ell(w) \leq \ell(v)$ for every $v\in [w]$. 
It turns out that the set of all such minimal length double coset 
representatives
is given by ${}^{I}{W} \cap W^J$, where $^{I}{W}$ 
stands for the set of minimal length coset representatives for $W_I \backslash W$. 
We denote $^{I}{W} \cap W^J$ by 
$X_{I,J}^-$. Set $H= I \cap w J w^{-1}$. 
Then $ uw \in W^J$ for $u\in W_I$ 
if and only if $u$ is a minimal length coset 
representative for $W_I/W_H$. 
In particular, every element of $W_I w W_J$ 
has a unique expression of the form 
$uwv$ with $u\in W_I$ is a minimal length 
coset representative of $W_I/W_H$, $v\in W_J$ and 
$\ell(uwv) = \ell(u)+\ell(w)+\ell(v)$.

\vspace{.25cm}

Another characterization of the sets $X_{I,J}^-$ is as follows. 
For $w\in W$, the {\em right ascent set} is defined as 
\begin{align*}
\mt{Asc}_R(w) = \{ s\in R :\ \ell(ws) > \ell(w) \}.
\end{align*}
The {\em right descent set}, $\mt{Des}_R(w)$ is the complement $R-\mt{Asc}_R(w)$. 
Similarly, the {\em left ascent set} of $w$ is 
\begin{align*}
\mt{Asc}_L(w) = \{ s\in R :\ \ell(sw) > \ell(w) \}\ \text{ ($=\mt{Asc}_R(w^{-1})$).}
\end{align*}
Then 
\begin{align}\label{A:another}
X_{I,J}^- &= \{ w\in W:\ I\subseteq \mt{Asc}_L(w)\ \text{ and } J\subseteq \mt{Asc}_R(w) \}\\
&=  \{ w\in W:\ I^c\supseteq \mt{Des}_R(w^{-1})\ \text{ and } J^c\supseteq \mt{Des}_R(w) \}
\end{align}

\vspace{.25cm}

For our purposes we need the distinguished 
set of maximal length representatives for each 
double coset. 
It is given by 
\begin{align}\label{A:another}
X_{I,J}^+ &= \{ w\in W:\ I\subseteq \mt{Des}_R(w^{-1})\ \text{ and } J\subseteq \mt{Des}_R(w) \}\\
&= \{ w\in W:\ I^c\supseteq \mt{Asc}_R(w^{-1})\ \text{ and } J^c\supseteq \mt{Asc}_R(w) \}
\end{align}
For a proof of this characterization of $X_{I,J}^+$, see~\cite[Theorem 1.2(i)]{Curtis85}. 
\begin{Remark}\label{R:opposite}
The Bruhat-Chevalley orders on $X_{I,J}^-$ and $X_{I,J}^+$ are isomorphic. 
\end{Remark}

\subsection{}\label{SS:4}

We mentioned in the introductory section
that Littelmann classified the pairs of  
parabolic subgroups $(P_I,P_J)$ 
corresponding to fundamental dominant weights 
such that the diagonal $G$ action on $G/P_I\times G/P_J$ is spherical.
Said differently, we know all pairs $(I,J)$ of subsets of $R$ 
such that 
\begin{itemize} 
\item $|I| = |J|= |R|-1$, and 
\item $G/P_I\times G/P_J$ is a spherical double flag variety for $G$.
\end{itemize} 
In particular, under the maximality assumption 
of the subsets $I$ and $J$, the poset of 
$G$-orbit closures is a chain, 
see~\cite[Proposition 3.2]{Littelmann}. 
In the light of our Lemma~\ref{L:poset isom}, 
this is equivalent to the statement that 
with respect to the Bruhat-Chevalley order, the set 
$X_{I,J}^+$ is a chain.

\vspace{.5cm}

We mention also that the classification of 
Littelmann is extended by Stembridge to 
cover all pairs of subsets $(I,J)$ in $R$ 
such that $G/P_I\times G/P_J$ is a spherical double flag variety for $G$. 
See Corollaries 1.3.A -- 1.3.D, 1.3.E6, 1.3.E7, 
and 1.3.\{E8,F4,G2\} in~\cite{Stembridge}.

\begin{Remark}\label{R:in order}
\begin{enumerate}
\item We call a spherical double flag variety $G/P_I\times G/P_J$ trivial if 
one of the factors is isomorphic to a point, that is $P_I=G$ or $P_J=G$. 
In the cases of E8,F4, and G2 all of the spherical double flag varieties are trivial.
\item In the cases of A--D, E6, and E7, if $G/P_I\times G/P_J$ is a spherical 
double flag variety for $G$,
then at least one of the subsets $I$ and $J$ is maximal, 
that is to say, of cardinality $|R|-1$. Without loss of generality
we always choose $I$ to be the maximal one. 
\end{enumerate}
\end{Remark}

\subsection{}\label{SS:5}

In this subsection we will review the 
useful concept of ``tight Bruhat order.''
We maintain our notation from the previous
subsections.

\vspace{.5cm}

One way to define the Bruhat-Chevalley 
order on $W$ is to use the reflection representation
of $W$ as the group of isometries of $\mathbf{E}$.
Let $\langle\ ,\rangle$ denote the $W$-invariant 
inner product on $\mathbf{E}$, 
and let $\theta \in \mathbf{E}$ be a vector such that 
$\langle \theta, \beta \rangle \geq 0$ for all $\beta \in \Phi^+$. 
Such a vector is called dominant. It is indeed 
dominant in the sense of Subsection~\ref{SS:1}.

\vspace{.5cm}

It is well known that the stabilizer of a dominant vector 
is a parabolic subgroup $W_J\subset W$, where  
$J=\{s_\alpha \in R:\ \langle \theta ,\alpha \rangle=0\}$. 
Thus, as a set, the minimal length coset representatives 
$W^J \subset W$ of the quotient $W/W_J$ can be identified 
with the orbit $W \theta$. 
Following Stembridge, we are going to call the orbit map 
$w\mapsto w\cdot \theta$ the evaluation.

\vspace{.5cm}

A proof of the following result can be 
found in~\cite{Stembridge02}.
\begin{Proposition}\label{P:BC1}
Let $\theta \in \mathbf{E}$ be a dominant 
vector with stabilizer $W_J$. 
The evaluation map induces a poset isomorphism 
between the Bruhat-Chevalley order on $W^J$ and the orbit 
$W\theta$ with partial order $\leq_B$ defined by the 
transitive closure of the following relations:
$$
\mu \leq_B s_\beta(\mu) \ \text{ for all $\beta \in \Phi^+$ 
such that $\langle \mu ,\beta \rangle > 0$.}
$$
\end{Proposition}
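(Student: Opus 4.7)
The plan is to establish the claimed poset isomorphism in three parts: bijectivity, a key equivalence between pointwise and combinatorial conditions, and the translation of reflection chains in both directions.

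First, bijectivity is immediate: since $W_J$ is the stabilizer of $\theta$, the orbit $W\theta$ is in canonical bijection with $W/W_J$, and $W^J$ is a transversal for this quotient, so $w\mapsto w\cdot\theta$ restricts to a bijection $W^J\to W\theta$.

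Second, I would establish the following equivalence, valid for $u\in W^J$ and $\beta\in\Phi^+$: the pairing $\langle u\theta,\beta\rangle>0$ is equivalent to $u^{-1}\beta\in\Phi^+\setminus\Phi_J$, and this is in turn equivalent to the conjunction $\ell(s_\beta u)>\ell(u)$ and $u^{-1}\beta\notin\Phi_J$. The first equivalence uses the identity $\langle u\theta,\beta\rangle=\langle\theta,u^{-1}\beta\rangle$ together with the dominance of $\theta$: since the stabilizer root system of $\theta$ is $\Phi_J$, one has $\langle\theta,\gamma\rangle>0$ precisely when $\gamma\in\Phi^+\setminus\Phi_J$. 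The second equivalence uses the standard reflection-length criterion $\ell(s_\beta u)>\ell(u)\iff u^{-1}\beta\in\Phi^+$.

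Third, this equivalence allows a chain-by-chain translation in both directions. For the order-preserving direction, given $v\leq w$ in $W^J$, I would pick any Bruhat chain $v=u_0<u_1<\cdots<u_k=w$ in $W$ by reflections, with $u_{i+1}=s_{\beta_i}u_i$, $\beta_i\in\Phi^+$, and $\ell(u_{i+1})=\ell(u_i)+1$. At each step $u_i^{-1}\beta_i\in\Phi^+$, and a dichotomy arises: either $u_i^{-1}\beta_i\in\Phi_J$, whence $u_{i+1}\theta=u_i\theta$, or $u_i^{-1}\beta_i\in\Phi^+\setminus\Phi_J$, whence the equivalence yields $u_i\theta\leq_B u_{i+1}\theta$. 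Either way the $\leq_B$-relation propagates, and so $v\theta\leq_B w\theta$. For the reflecting direction, given a $\leq_B$-chain with $\mu_i=u_i\theta$, $u_i\in W^J$, $\mu_{i+1}=s_{\beta_i}\mu_i$, and $\langle\mu_i,\beta_i\rangle>0$, the equivalence gives $u_i<s_{\beta_i}u_i$ in Bruhat order on $W$; since $s_{\beta_i}u_i$ and $u_{i+1}$ lie in the same $W_J$-coset, applying the monotone projection $x\mapsto x^J$ to this inequality yields $u_i\leq u_{i+1}$ in $W^J$, and transitivity then gives $v\leq w$.

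The one nontrivial external ingredient invoked above is the monotonicity of the projection $x\mapsto x^J$ from $W$ onto $W^J$ with respect to the Bruhat-Chevalley order; this is classical for parabolic quotients of Coxeter groups, but it is the step that is not an immediate calculation, and I expect it to be the main point requiring care. Once that is granted, both directions of the isomorphism reduce to routine bookkeeping with the key equivalence.
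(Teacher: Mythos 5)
Your argument is correct, but note that the paper does not actually prove Proposition~\ref{P:BC1}: it is quoted from the literature with only the sentence ``A proof of the following result can be found in~\cite{Stembridge02}.'' So there is no in-paper proof to compare against; what you have written is a self-contained argument of the standard kind. Your key equivalence is sound: $\langle u\theta,\beta\rangle=\langle\theta,u^{-1}\beta\rangle$, and for dominant $\theta$ with stabilizer $W_J$ one has $\langle\theta,\gamma\rangle>0$ exactly for $\gamma\in\Phi^+\setminus\Phi_J$ (a positive root pairs to zero with $\theta$ iff its support lies in $J$), while $\ell(s_\beta u)>\ell(u)\iff u^{-1}\beta\in\Phi^+$ is the usual reflection criterion. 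In the forward direction your dichotomy is applied to intermediate chain elements $u_i$ that need not lie in $W^J$, but the computation $\langle u_i\theta,\beta_i\rangle>0\iff u_i^{-1}\beta_i\in\Phi^+\setminus\Phi_J$ holds for arbitrary $u_i\in W$, and in the degenerate case $u_i^{-1}\beta_i\in\Phi_J$ the identity $s_{\beta_i}u_i=u_i s_{u_i^{-1}\beta_i}$ shows the evaluation is unchanged, so the step is harmless. The two external inputs you rely on --- the chain property of the Bruhat order by reflections with unit length increments, and the monotonicity of the parabolic projection $x\mapsto x^J$ --- are both classical (e.g.\ Bj\"orner--Brenti, Prop.~2.5.1 for the latter), and you correctly flag the projection as the one step that is not a bare calculation. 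I see no gap.
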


\vspace{.5cm}

Let $I$ be a subset of $R$ 
and let $\Phi_I \subset \Phi$ denote the root
subsystem corresponding to the parabolic subgroup $W_I$.
We denote by $\Phi_I^+$ the intersection $\Phi^+\cap \Phi_I$. 
If $\theta$ is a dominant vector and its stabilizer subgroup is 
$W_J$ with $J\subset R$, then we define 
\begin{align}\label{A:double quotient}
(W\theta)_I:= \{\mu \in W\theta :\ \langle \mu ,\beta \rangle \geq 0 \ 
\text{ for all $\beta \in \Phi^+_I$} \}.
\end{align}

\vspace{.5cm}

A proof of the following result can be found 
in~\cite[Proposition 1.5]{Stembridge05}.
\begin{Proposition}\label{P:BC2}
Let $I,J\subset R$ be two sets of Coxeter generators 
for $W$ and let $\theta \in \mathbf{E}$ 
be a dominant vector with stabilizer $W_J$. 
Then the evaluation map induces a poset 
isomorphism between the (restriction of) 
Bruhat-Chevalley order on $X_{I,J}^-$ and $(W\theta)_I$ 
with partial order defined by the transitive closure of 
the relations
$$
\mu \leq_B s_\beta(\mu) \ \text{ for all $\beta \in \Phi^+$ 
such that $s_\beta (\mu)\in (W\theta)_I$ and 
$\langle \mu ,\beta \rangle > 0$}.
$$
\end{Proposition}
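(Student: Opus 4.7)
The plan is to build on Proposition~\ref{P:BC1}, which is the special case $I=\emptyset$ of what we want, and to upgrade that isomorphism by restricting both sides. On the Weyl group side, we pass from $W^J$ to the subset $X_{I,J}^- = {}^I W \cap W^J$; on the geometric side, we pass from the orbit $W\theta$ to the subset $(W\theta)_I$. The proof then has three logical steps: verify the bijection on the restricted subsets, transport the Bruhat order using Proposition~\ref{P:BC1}, and, finally, show that the restricted relation on $(W\theta)_I$ is already generated (as a transitive closure) by covering steps $\mu \leq_B s_\beta(\mu)$ that stay inside $(W\theta)_I$.

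For the bijection, I would use the characterization of $X_{I,J}^-$ recalled in Subsection~\ref{SS:3}: $w \in X_{I,J}^-$ iff $w \in W^J$ and $w^{-1}\alpha > 0$ for every simple $\alpha \in I$. On the orbit side, by bilinearity and the fact that $\Phi_I^+$ consists of nonnegative integer combinations of the simple roots in $I$, the condition $\mu \in (W\theta)_I$ reduces to $\langle \mu, \alpha\rangle \geq 0$ for $\alpha \in I$ simple. For $w \in W^J$ and $\mu = w\theta$, the equivalence $w^{-1}\alpha > 0 \iff \langle \theta, w^{-1}\alpha\rangle \geq 0$ follows from the dominance of $\theta$ together with the observation that $\theta$ vanishes precisely on $\Phi_J$: if $w^{-1}\alpha < 0$ and $\langle \theta, w^{-1}\alpha\rangle \geq 0$, then $w^{-1}\alpha \in \Phi_J^-$, which contradicts $w \in W^J$ via the standard criterion $w(\Phi_J^+) \subset \Phi^+$.

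Having matched the underlying sets, the Bruhat–Chevalley order on $X_{I,J}^-$ is, by the facts reviewed in Subsection~\ref{SS:3} (together with Remark~\ref{R:opposite}), precisely the restriction of the Bruhat order on $W^J$ to this subset. Applying Proposition~\ref{P:BC1}, this transports to the restriction of $\leq_B$ on $W\theta$ to $(W\theta)_I$. Thus the proposition amounts to the claim that this restricted relation equals the transitive closure of the "internal" covering relations $\mu \leq_B s_\beta(\mu)$ with $s_\beta(\mu) \in (W\theta)_I$ and $\langle \mu, \beta\rangle > 0$. The inclusion of the transitive closure inside the restricted relation is trivial.

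The main obstacle is the reverse inclusion: given a chain $\mu = \mu_0 \leq_B \mu_1 \leq_B \cdots \leq_B \mu_k = \nu$ in $W\theta$ with endpoints in $(W\theta)_I$, reroute it so that every intermediate vertex lies in $(W\theta)_I$. I would proceed by induction on the total length of the chain together with the number of indices $i$ at which $\mu_i \notin (W\theta)_I$. At the first excursion one has $\mu_{i-1} \in (W\theta)_I$ but $\langle \mu_i, \gamma\rangle < 0$ for some simple $\gamma \in I$; the idea is to replace $\mu_i$ by $s_\gamma \mu_i$, noting that $\langle s_\gamma \mu_i, \gamma\rangle > 0$ swings back into the $W_I$-dominant half-space, and then to use the exchange/subword property (the reflection-order analogue of Deodhar's lemma used by Stembridge in the tight-Bruhat setting) to splice a shorter chain from $\mu_{i-1}$ to $s_\gamma \mu_i$ and onward to $\nu$, each step of which can be chosen to respect the $(W\theta)_I$ constraint. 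The convexity of the $W_I$-dominant chamber and the fact that reflection by $s_\gamma$ either preserves or improves membership in $(W\theta)_I$ is what makes the rerouting terminate. This folding argument, which is the technical heart of the "tightness," is the step I expect to require the most care.
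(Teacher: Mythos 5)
First, a point of comparison: the paper does not prove Proposition~\ref{P:BC2} at all. It is quoted from Stembridge, with the proof deferred to \cite[Proposition 1.5]{Stembridge05}, so there is no internal argument to measure your attempt against; what follows is an assessment of the attempt on its own terms.

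Your first two steps are correct and essentially routine. The identification of $X_{I,J}^-$ with $(W\theta)_I$ under evaluation works as you say: for $w\in W^J$ and a simple root $\alpha\in I$, if $w^{-1}\alpha<0$ then $\langle\theta,w^{-1}\alpha\rangle<0$ strictly, since $w(\Phi_J^+)\subset\Phi^+$ rules out $-w^{-1}\alpha\in\Phi_J^+$; and nonnegativity on the simple roots of $I$ propagates to all of $\Phi_I^+$ by linearity. Transporting the restricted order through Proposition~\ref{P:BC1} is likewise fine, and the inclusion of the transitive closure of the internal relations into the restricted order is indeed trivial. The difficulty is that the reverse inclusion is the \emph{entire} content of the proposition, and there your argument is a sketch resting on an unjustified --- and, as literally stated, false --- claim. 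You assert that ``reflection by $s_\gamma$ either preserves or improves membership in $(W\theta)_I$''; but for two non-orthogonal simple roots $\gamma,\alpha\in I$, applying $s_\gamma$ to repair the sign of $\langle\cdot\,,\gamma\rangle$ can simultaneously make $\langle\cdot\,,\alpha\rangle$ negative, so a single reflection can push a vertex out of $(W\theta)_I$ in a new direction, and your induction is left without a decreasing quantity to guarantee termination. Moreover, after replacing $\mu_i$ by $s_\gamma\mu_i$ you still owe a proof that the spliced sequence $\mu_{i-1}\leq_B s_\gamma\mu_i\leq_B\cdots\leq_B\nu$ consists of genuine $\leq_B$-relations; this is exactly where one needs the lifting property of the Bruhat order applied on both the left and the right (the standard mechanism for such chain-rerouting arguments in double quotients), and you only gesture at it via ``the exchange/subword property.'' As it stands, the folding step is not a proof, and the proposition reduces to precisely the statement you have not established. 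Given that the paper itself treats this as a black box from \cite{Stembridge05}, the cleanest fix is either to cite that result outright or to carry out the two-sided lifting argument in full.
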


\vspace{.5cm}

Now we come to the definition of a critical notion for our proof. 
There is a natural partial ordering on the roots defined by 
\begin{align}\label{A:natural}
\nu \preceq \mu \iff \mu-\nu \in \R^+ \Phi^+.
\end{align}
It turns out, when the interpretation of Bruhat-Chevalley 
ordering as given in Proposition~\ref{P:BC1}
is used, there is a natural order reversing implication:
\begin{align}\label{A:implication}
\mu \leq_B \nu \implies \nu \preceq \mu.
\end{align}
If the converse implication also holds, 
then the poset $W\theta$ is called tight. 
More precisely, a subposet $(M,\leq_B)$ 
of the Bruhat-Chevalley order on $(W\theta,\leq_B)$ is called tight
if 
$$
\mu \leq_B \nu \iff \nu \preceq \mu
$$
for all $\nu,\mu$ in $M\subset \mathbf{E}$.

\vspace{.5cm}

In the light of our Remark~\ref{R:in order} part 3, 
we assume that $I\subset R$ is a maximal subset 
of the form $I= R-\{s\}$ for some $s\in R$. 
Also, we assume that there exists a dominant 
$\theta \in \mathbf{E}$ such that 
$W_J$ is its stabilizer subgroup. 
Now, by \cite[Theorem 2.3]{Stembridge05}, 
we see that if $W^J$ is tight, then 
$X_{I,J}^- = X_{R-\{s\},J}^-$ is a chain. 
The list of tight quotients is also given 
in~\cite{Stembridge05}; $(W^J,\leq_B)$ 
is tight if and only if $W$ is of at most rank 2, 
or $J=R$, or one of the following holds: 
\begin{itemize}
\item $W\cong A_n$ and $J^c=\{ s_j \}$ $(1\leq j \leq n )$ or $J^c=\{ s_j,s_{j+1} \}$ $(1\leq j \leq n-1 )$,
\item $W\cong B_n$ and $J^c=\{ s_1 \},\{ s_2 \},\{ s_n \}$, or  $J^c=\{ s_1,s_2 \}$,
\item $W\cong D_n$ and $J^c=\{ s_1 \},\{ s_2 \}$ or $J^c=\{ s_n \}$, 
\item $W\cong E_6$ and $J^c=\{ s_1 \}$ or $J^c=\{ s_6 \}$, 
\item $W\cong E_7$ and $J^c=\{ s_7 \}$, 
\item $W\cong F_4$ and $J^c=\{ s_1 \}$ or $J^c=\{ s_4 \}$, or 
\item $W\cong H_3$ and $J^c=\{ s_1 \}$ or $J^c=\{ s_3 \}$. 
\end{itemize}
Therefore, in these cases (when $I$ is maximal 
and $J$ is as in this list) we know that 
$X_{I,J}^- = X_{R-\{s\},J}^-$ is a chain. 
We finish our preliminaries section by listing 
the remaining cases under the assumption
that $I$ is of the form 
$R-\{s\}$ for some $s\in R$.
\begin{itemize}
\item $W\cong A_n$ 
\begin{enumerate}
\item $I^c\in \{ \{s_2\}, \{s_{n-1}\}\}$ and $J^c=\{s_p,s_q\}$ with $1<p <p+1<q<n$;
\item $I^c \in \{ \{s_1\}, \{s_n\}\}$ and $|J^c| \geq 2$ 
(but $J^c\neq \{ s_j,s_{j+1} \}$ $(1\leq j \leq n-1 )$);
\item $I^c \in \{ \{s_2\},\dots,\{s_{n-1}\}\}$, and $J^c=\{s_1,s_j\}$ or $J^c=\{s_j,s_n\}$ with $2< j < n-1$.
\end{enumerate}
\item $W\cong C_n$
\begin{enumerate}
\item $I^c=\{s_n\}$ and $|J^c|=1$;
\item $I^c=J^c=\{s_1\}$.
\end{enumerate}
\item $W\cong D_n$ ($n\geq 4$)
\begin{enumerate}
\item $I^c=\{s_n\}$ and $J^c= \{s_l,s_i \}$ with $1\leq i \leq n$ and $1\leq l \leq 2$;
\item $I^c\in \{ \{s_1 \},\{s_2\}\}$, and 
$J^c \subsetneq \{s_1,s_2,s_n\}$ or $J^c\subseteq \{s_{n-1},s_n\}$ 
or $J^c=\{s_{n-2}\}$;
\item ($n=4$ case only) $I^c= \{s_1\}$ and $J^c=\{s_2,s_3\}$ 
or $I^c=\{s_2\}$ and $J^c=\{s_1,s_3\}$. 
\end{enumerate}
\item $W\cong E_6$
\begin{enumerate}
\item $I^c\in \{ \{s_1\},\{s_6\}\}$ and $J^c=\{s_1,s_6\}$.
\end{enumerate}
\end{itemize}

\section{Proof of the main result}\label{S:main}

The Weyl group of $(SL(n+1),T)$, where $T$ 
is the maximal torus of diagonal matrices 
is isomorphic to the symmetric group $S_{n+1}$.
A set of Coxeter generators $R \subset S_{n+1}$ is given by 
the set $$R= \{ s_i = (i,\, i+1) :\ i=1,\dots, n\},$$
where $(i,\ i+1)$ is the simple transposition
that interchanges $i$ and $i+1$ and leaves 
everything else fixed. 
For easing our notation, whenever it is clear
from the context, we will denote the simple 
transposition $s_i$ by its index $i$.

\vspace{.25cm}

In the light of Lemma~\ref{L:poset isom},
Subsection~\ref{SS:3}, and Subsection~\ref{SS:5},
to prove our main result Theorem~\ref{T:main},
it will suffice to analyze the Bruhat-Chevalley 
order on the set of distinguished double coset 
representatives, $X_{I,J}^+$. We will do 
this analysis on a case-by-case basis for
\begin{enumerate}
\item $I^c\in \{ \{2\}, \{{n-1}\}\}$ and $J^c=\{p,q\}$ ($1<p <p+1<q<n$);
\item $I^c \in \{ \{1\}, \{n\}\}$ and $|J^c| \geq 2$ 
(but $J^c\neq \{ j, {j+1} \}$ $(1\leq j \leq n-1 )$);
\item $I^c \in \{ \{2\},\dots,\{n-1\}\}$, and $J^c=\{1,j\}$ or $J^c=\{j, n\}$ with $2< j < n-1$.
\end{enumerate}

\subsection{Case 1.}

We start with a general remark which we will use in the sequel.
\begin{Remark}\label{R:can induct}
Let $w\in S_{n+1}$ be a permutation 
whose one-line notation ends with the 
decreasing string $k \ k-1 \dots 2 \ 1$. 
In this case, any element in the upper interval $[w,w_0]\subset S_{n+1}$ 
has the same ending. In other words, 
if $w'\in [w,w_0]$, then the last $k$ entries of $w'$ are
exactly $k,k-1,\dots, 1$ in this order. 
Similarly, if $w$ begins with the decreasing 
string $n+1\ n  \dots \ k$ for some $k\in \{1,\dots, n+1\}$, then 
any element in the upper interval $[w,w_0]\subset S_{n+1}$ 
has the same beginning. So, essentially, these elements 
form an upper interval of $S_{n+1}$, which is isomorphic to 
$S_{n+1-k}$. In a similar way, if we consider the 
set of permutations that starts with the string $1\, 2\,\dots k$,
then we obtain a lower interval that is isomorphic to $S_{n+1-k}$ in $S_{n+1}$.
\end{Remark}

Now we proceed to give our proof, 
starting with the sub-case $I^c = \{2\}$.

Let $w=w_1\dots w_{n+1}$ be an element, 
in one-line notation, from $X_{I,J}^+$. Recall that 
$$
X_{I,J}^+ =\{ w\in W:\ I^c\supseteq \mt{Asc}_R(w^{-1})\ 
\text{ and } J^c\supseteq \mt{Asc}_R(w) \}.
$$
The meaning of $I^c =\{ 2\} \supseteq \mt{Asc}_R(w^{-1})$ 
is that either $ \mt{Asc}_R(w^{-1})=\emptyset$,
in which case $w$ is equal to $w_0$, the longest permutation, 
or, $ \mt{Asc}_R(w^{-1})=\{2\}$ hence 
$2$ comes before $3$ in $w$ and there are 
no other consecutive pairs $(a,a+1)$ such that $a$ comes before 
$a+1$ in $w$. 
Note also that $ \mt{Asc}_R(w)$ cannot be empty 
unless $X_{I,J}^+=\{ w_0\}$. 

\vspace{.5cm}

We continue with the assumption that $w\neq w_0$. 
Suppose $ J^c = \{ p,q \}$ for $1 < p < p+1 < q < n$. 
We are going to write $L_1$ for the segment $w_1 w_2 \dots w_{p}$, 
$L_2$ for the segment 
$w_{p+1} \dots w_{q}$, and $L_3$ for the segment 
$w_{q+1} \dots w_{n+1}$.
By our assumptions, all three of these 
segments are decreasing sequences. 
In particular, since $2$ comes before $3$ in $w$, 
$2$ cannot appear in $L_3$. In fact, $2$ and $3$ cannot 
appear in the same segment.

First, we assume that $p=2$. 
Since any element of $X_{I,J}^+$ has 
descents (at least) at the positions 
$J=\{1,\widehat{2},3,4,\dots, \widehat{q},\dots, {n+1}\}$, 
the bottom element $\tau_0$ is either of the form 
\begin{align}\label{A:1 or 2}
\tau_0=2\ 1\ | n+1 \ n \dots n-q + 3 \ \ n-q +2\, \ n-q +1 \dots 3,
\end{align}
or it is of the form 
\begin{align}\label{A:2 or 1}
\tau_0=n+1 \ n \dots n-q + 4 \ \ 2\ \ 1\ | \ n-q + 3 \ \ n-q +2 \dots 3. 
\end{align}
The bars between numbers indicate the 
possible positions of ascents.
Note that the number of inversions of the 
former permutation is $1+ {n-1 \choose 2}$, 
and the rank of the latter is 
\begin{align*}
f_n(q) &:= \left( \sum_{i=1}^{q-2} n+1-i \right) +  1 + \left( \sum_{i=q+1}^{n} n+1-i \right) \\
&= {n+1 \choose 2} +1 - (n+1 -q) - (n+1- (q-1)).
\end{align*}
which is always greater than the former. 
Therefore, the minimal element $\tau_0$ 
of $X_{I,J}^+$ starts with $2\ 1$ 
(as in~\ref{A:1 or 2}).

\vspace{.5cm}

This element has a single ascent at the $2$-nd position. 
We will analyze the covers of $\tau_0$. 
Since an upward covering in Bruhat-Chevalley order is obtained 
by moving a larger number to the front, $n+1$ of $L_2$ moves 
into $L_1$ and accordingly either 2 or 1 from 
$L_1$ moves into $L_2$.

\vspace{.5cm}

Recall that each double coset $W_I z W_J$ 
is an interval of $W$ in Bruhat-Chevalley 
order and $X_{I,J}^+$ consists 
of maximal elements of these intervals 
(see \cite[Theorem 1.2(ii)]{Curtis85}).
It follows from this critical observation that, 
to obtain a covering of $\tau_0$, $1$ has to move, 
and it becomes the 
last entry of $L_2$. 
In other words, the permutation 
$$
\tau_1= n+1\ 2 \ | \ n \dots n-q + 3 \ \ 1\ | \ n-q +2\, \ n-q +1 \dots 3
$$
is the unique element in $X_{I,J}^+$ that covers $\tau_0$.

\vspace{.5cm}

Next, we analyze the covers of $\tau_1$; 
it has only two possible coverings which are obtained as follows:
1) 2 moves into $L_2$ and $n$ moves into 
$L_1$, 2) $1$ moves into $L_3$ and $n-q+2$ moves into $L_2$.
The resulting elements are 
\begin{align*}
\tau_2 &= n+1\ n \ | \ n-1 \dots n-q + 3 \ \ 2\ \ 1\ |\ n-q +2\, \ n-q +1 \dots 3, \\
\tau_3 &= n+1\ 2 \ | \ n \dots n-q + 3 \ \ n-q +2\ |  \ n-q +1 \dots 3\ 1.
\end{align*} 
It is not difficult to see that each of these 
two elements are covered by the same element, namely 
$$
\tau_4 = n+1\ n \ | \ n-1 \dots n-q + 3 \ \  n-q+2\ \   2\ \  |\ \ n-q +1 \dots 3 \ 1.
$$
Observe that, in $\tau_4$ the only entry that can be moved is 2 and this is possible only if 
the inequality $q \leq n-1$ holds.
This agrees with our assumption on $q$. 
Therefore, there exists a unique cover of $\tau_4$, which is $w_0$.
Note that all that is said above is independent of $n$ as long as $p=2$ and $3 < q < n$.
Hence, our poset is as in Figure~\ref{F:stretched diamond}.
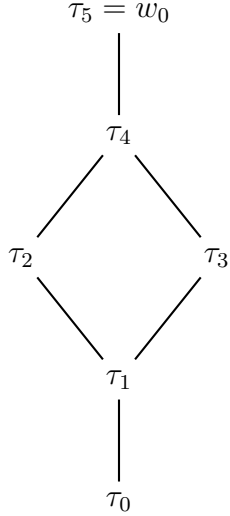
\begin{figure}[htp]
\centering
\begin{tikzpicture}[scale=.65]
\begin{scope}
\node at (0,-5) (t0) {$\tau_0$};
\node at (0,-2.5) (t1) {$\tau_1$};
\node at (-2,0) (t2) {$\tau_2$};
\node at (2,0) (t3) {$\tau_3$};
\node at (0,2.5) (t4) {$\tau_4$};
\node at (0,5) (t5) {$\tau_5=w_0$};
\draw[-, thick] (t0) to (t1);
\draw[-, thick] (t1) to (t2);
\draw[-, thick] (t1) to (t3);
\draw[-, thick] (t2) to (t4);
\draw[-, thick] (t3) to (t4);
\draw[-, thick] (t4) to (t5);
\end{scope}
\end{tikzpicture}
\caption{The Bruhat-Chevalley order on $X_{I,J}^+$ for Case 1.}
\label{F:stretched diamond}
\end{figure}

Finally, we look at the case for $p>2$. 
The only difference between this and $p=2$ case is that 
the first $p-2$ terms of the elements of $X_{I,J}^+$ 
all start with $n+1\ \ n \ \ n-2 \dots n - p$.
By using Remark~\ref{R:can induct} and induction,
we reduce this case to the case of $p=2$.
Therefore, our poset $X_{I,J}^+$ is isomorphic to 
the one in Figure~\ref{F:stretched diamond}.

\vspace{.25cm}

We proceed with the second sub-case of Case 1;
we assume that $I^c= \{n-1\}$ and 
$J=\{ p,q\}$ with $2\leq p< p+1 < q \leq n-1$. 
As in the previous sub-case, for an element 
$w\in X_{I,J}^+$ these conditions imply 
that $w$ is of the form 
$w= L_1 | L_2 | L_3$, where $L_i$, $i=1,2,3$ 
are decreasing sequences of lengths 
$p, q-p$ and $n+1-q$, respectively, 
and the number $n-1$ appears before $n$ in $w$. 
It follows that the smallest element of $X_{I,J}^+$ is of the form 
$$
\tau_0=w_1\dots w_p | w_{p+1}\dots w_q | w_{q+1} \dots w_{n+1} = 
 n-1 \ n-2 \dots n-q  \ | \ n+1\ \ n \ \ n-q -1 \ \ n-q-2 \dots 1
$$
Then arguing exactly as in the previous 
case one sees that the poset under consideration 
is also of the form 
Figure~\ref{F:stretched diamond}.

\subsection{Case 2.}

We start with the sub-case $I^c =\{1\}$, and we let generously 
$J^c$ be any proper subset $J^c \subset \{1,\dots, n\}$.
Let $w= w_1\dots w_{n+1}$ be an element from $X_{I,J}^+$
and let $v_1\dots v_{n+1}$ denote the inverse, $w^{-1}$ of $w$.
Since  $\mt{Asc}_R(w^{-1}) \subseteq \{1\}$, we have 
either $w=w^{-1}=w_0$,
or 
\begin{align}\label{A:obviously}
v_1 < v_2 > v_3 > \cdots > v_{n+1}.
\end{align}
Let $V'$ denote the set of permutations whose entries
satisfy the inequalities in (\ref{A:obviously}) and set 
$$
V:=V'\cup \{w_0\}.
$$
Then $V$ has $n+1$ elements, and furthermore, $(V,\leq)$ 
is a chain. 
But in Bruhat-Chevalley order we have 
$$
u \leq v \iff u^{-1} \leq v^{-1} \ \text{ for every } u,v\in S_{n+1}.
$$
Therefore, 
$V^{-1}:= \{ v^{-1} :\ v\in V\}$ is a chain also.
It follows that, as a subposet of $V^{-1}$, $X_{I,J}^+$ is a chain as well.
This finishes the proof of the first part of Case 2.

Next, we assume that $I^c=\{n\}$ and let 
$w= w_1\dots w_{n+1}\in X_{I,J}^+$. 
If $w^{-1}=v_1\dots v_{n+1}$ denotes the inverse of $w$, then, as before, 
we have either $w=w^{-1}=w_0$,
or 
\begin{align}\label{A:obviously}
v_1 > v_2 \cdots > v_n < v_{n+1}.
\end{align}
By arguing as in the previous paragraph we see that $X_{I,J}^+$ is a chain 
in this case as well, and hence, the proof of Case 2 is finished.

\subsection{Case 3.}

Now, we proceed with the proof of Case 3 but 
since we have symmetry, 
we will consider the case of 
$I^c=\{i\}$ with $2\leq i \leq  n-1$ and $J^c=\{1, j\}$ with $2 < j <n-1$ only. 
Let us note also that as the number $i\in I^c$ grows up to $\lfloor \frac{n+1}{2} \rfloor$ 
we get more freedom to position $i$ and $i+1$ in an element $w\in X_{I,J}^+$;
this makes $X_{I,J}^+$ grow taller as a poset. 
Now we are ready to present the structure
of our poset in detail. 
\vspace{.5cm}

A generic element 
$w=w_1 \dots w_{n+1}$ from $X_{I,J}^+$ is viewed as a concatenation 
of three segments, $w=L_1 L_2 L_3$
where $L_1=w_1$, $L_2=w_2\dots w_j$, 
and $L_3 = w_{j+1}\dots w_{n+1}$. 
The possible ascents are at the 
$1$-st and at the $j$-th positions. 
At the same time, if $w\neq w_0$, 
then we have $w^{-1} \neq w_0$, 
therefore, $w^{-1}$ has an ascent at the $i$-th position. 
This means that $i$ comes before $i+1$ in $w$ and there are 
no other pairs $(a,a+1)$ such that $a$ comes before $a+1$ 
in $w$. 
Therefore, $i$ and $i+1$ are always contained 
in distinct segments except for $w=w_0$. 
In particular, $i$ appears either 
in $L_1$ or in $L_2$. 

\vspace{.25cm}

We proceed to determine the smallest element 
$\tau_0$ of $X_{I,J}^+$. 
Let us write $\tau_0$ in the form $\tau_0=L_1L_2L_3$ 
as in the previous paragraph
and let $k$ be the number in $L_1$. 
We observe that if $k\neq n+1$, then we have $k=i$.
Indeed, if we assume otherwise that $k\neq i$ and that $k\neq n+1$,
then we find that $k+1$ comes after $k$ in $\tau_0$; this is a contradiction. 
As a consequence of this observation we see that 
$\tau_0$ starts either with $n+1$ or with $i$. 
On the other hand, if $k=n+1$, then by interchanging 
$k$ with the first entry of $L_2$ we obtain another element in $X_{I,J}^+$
and this new element is smaller than $\tau_0$ in Bruhat-Chevalley order.
This is a contradiction as well. 
Therefore, in $\tau_0$, we have $i$ as the first entry.   
Now there are two easy cases;

1) $j \leq i$ and $\tau_0$ is of the form 
\begin{align}\label{A:bottom element 1}
\tau_0= i \ \ i-1 \dots i-j+1 \ | \ n+1 \ \ n \dots i+1 \ \ i- j \ \ i-j -1 \dots 1.
\end{align}

2) $j > i$ and $\tau_0$ is of the form 
\begin{align}\label{A:bottom element 2}
\tau_0= i \ \ n+1 \ \ n \dots n+2 - (j-i) \ \ i-1 \ \ i-2	\dots 1 \ | \ n+1 - (j-i) \ \ n - (j-i) \dots i+1.
\end{align}
Note that the vertical bar is between the $j$-th and the $j+1$-st positions.

\vspace{.25cm}

We proceed with some observations regarding 
how the posets climb up in 
the Bruhat-Chevalley order on $X_{I,J}^+$,
starting with $\tau_0$'s as in (\ref{A:bottom element 1}) and (\ref{A:bottom element 2}). 
First of all, if $\tau_0$ is as in (\ref{A:bottom element 1}), then to get a covering 
relation, there is only one possible interchange,  
namely, moving $i-j+1 \in L_2$ into $L_3$. 
In this case, to maintain the descents, 
the number that is replaced by 
$i-j+1$ has to be $n+1$, 
which goes into the first entry of $L_2$. 
In other words, the unique $w\in X_{I,J}^+$
that covers $\tau_0$ is 
\begin{align}\label{A:bottom element 3}
w= i \ \ n+1 \ \ i-1 \dots i-j +2 \ | \ n \dots i+1 \ \ i- j+1 \ \  i- j \ \ i-j -1 \dots 1.
\end{align}
It is easy to verify that there are exactly 
two elements that covers $w$;
\begin{align}\label{A:bottom element 4}
w_{(2)}= n+1 \ \ i \ \ i-1 \dots i-j +2 \ | \ n \dots i+1 \ \ i- j+1 \ \  i- j \ \ i-j -1 \dots 1
\end{align}
and
\begin{align}\label{A:bottom element 5}
w^{(2)}= i \ \ n+1 \ \ n \ \ i-1 \dots i-j+3 \ | \  n -1 \dots i+1 \ \ i- j+2 \ \  i- j +1  \dots 1.
\end{align}
By Remark~\ref{R:can induct} 
we see that all elements that lie 
above $w_{(2)}$ in $X_{I,J}^+$
start with $n+1$. Also, since there is 
no ascent at the $1$-st position for such elements, 
the resulting upper interval $[w_{(2)},w_0]$ in 
$X_{I,J}^+$ is isomorphic to a double coset 
poset in $S_{n+1}$ with $I^c=\{i\}$ and $J^c=\{j\}$, 
hence it is a chain.

\vspace{.5cm}

There are two covers of $w^{(2)}$; 
one of them, $w_{(3)}$, is an element of the interval $[w',w_0]$ 
(hence $w_{(3)}$ covers $w'$ as well). 
The other cover of $w^{(2)}$ is 
\begin{align}\label{A:bottom element 6}
w^{(3)}= i \ \ n+1 \ \ n \ \ n-1 \ \ i-1 \dots i-j+4 \ | \  n -1 \dots i+1 \ \ i- j+3 \ \  i- j +2  \dots 1.
\end{align}
Now the pattern is clear; 
$w^{(3)}$ has exactly two covers 
one of which lies in $[w_{(3)},w_0]$ 
and the other $w^{(4)}$ has a similar structure. 
Therefore, the bottom portion of the resulting poset is a `ladder', 
as depicted in Figure~\ref{F:Anii2}, 
and the chains $w^{(p)}$ and $w_{(p)}$, 
$p\geq 3$ climb up to meet for the first time either at $w_0$, 
or at    
\begin{align}\label{A:bottom element 7}
w^{(m+1)}=w_{(m+1)}=  n+1 \ \ n\dots  n+1 - (j-2) \ \ i \ | \  n+1 - (j-3) \dots \widehat{i} \dots 2 \ \ 1.
\end{align}
In the latter case, of course, $w_0$ is the unique cover 
of $w^{(m+1)}=w_{(m+1)}$ and 
it is easy to check from (\ref{A:bottom element 7}) that 
this happens if and only if $n+1-(j-1) > i$.
In both of these cases, the hight of our poset does not exceed $j$.

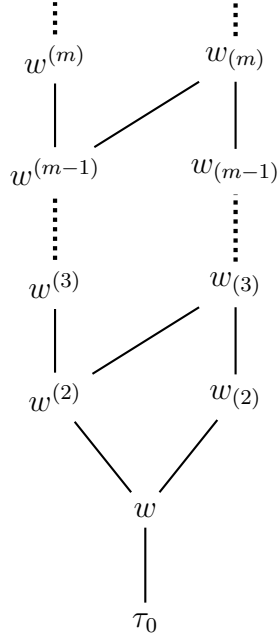
\begin{figure}[htp]
\centering
\begin{tikzpicture}[scale=.6]

\begin{scope}
\node at (0,-7.5) (t0) {$\tau_0$};
\node at (0,-5) (t1) {$w$};
\node at (-2,-2.5) (t2) {$w^{(2)}$};
\node at (-2,0) (t4) {$w^{(3)}$};
\node at (2,-2.5) (t3) {$w_{(2)}$};
\node at (2,0) (t5) {$w_{(3)}$};
\node at (-2,2.5) (t6) {$w^{(m-1)}$};
\node at (2,2.5) (t7) {$w_{(m-1)}$};
\node at (-2,5) (t8) {$w^{(m)}$};
\node at (2,5) (t9) {$w_{(m)}$};
\node at (-2,6.5) (t10) {};
\node at (2,6.5) (t11) {};
\draw[-, thick] (t0) to (t1);
\draw[-, thick] (t1) to (t2);
\draw[-, thick] (t1) to (t3);
\draw[-, thick] (t2) to (t4);
\draw[-, thick] (t3) to (t5);
\draw[-, thick] (t2) to (t5);
\draw[dotted, ultra thick] (t5) to (t7);
\draw[dotted, ultra thick] (t4) to (t6);
\draw[dotted, ultra thick] (t8) to (t10);
\draw[dotted, ultra thick] (t9) to (t11);
\draw[-, thick] (t6) to (t8);
\draw[-, thick] (t7) to (t9);
\draw[-, thick] (t6) to (t9);
\end{scope}
\end{tikzpicture}
\caption{The Bruhat-Chevalley order on $X_{I,J}^+$ for $I^c=\{i\}$, $J^c=\{1,j\}$,
where $2< j \leq i$.}
\label{F:Anii2}
\end{figure}

Now we look at the covers of $\tau_0$ in the case of (\ref{A:bottom element 2}). 
In this case, there are exactly two covers of $\tau_0$: 
\begin{align}\label{A:bottom element 21}
w_{(1)}= n+1 \ \ i \ \ n \dots n+2 - (j-i) \ \ i-1 \ \ i-2 \dots 1 \ | \ n+1 - (j-i) \ \ n - (j-i) \dots i+1
\end{align}
and
\begin{align}\label{A:bottom element 22}
w^{(1)}= i \ \ n+1 \ \ n \dots n+2 - (j-i) \ \ n+1 - (j-i) \ \ i-1 \ \ i-2 \dots 2 \ | \ \ n - (j-i) \dots i+1 \ \ 1.
\end{align}
The elements of $X_{I,J}^+$ that cover 
(\ref{A:bottom element 21}) and (\ref{A:bottom element 22}) are found in a similar 
way to those of (\ref{A:bottom element 4}) and 
(\ref{A:bottom element 5}). We depict the bottom portion of $X_{I,J}^+$
for $\tau_0$ as in (\ref{A:bottom element 2}) in Figure~\ref{F:Anii22}.
Note that, as in the previous case, 
the chains of the poset climb up to meet for the first time either at $w_0$, 
or at    
\begin{align*}
w^{(m+1)}=w_{(m+1)}=  n+1 \ \ n\dots  n+1 - (j-2) \ \ i \ | \  n+1 - (j-3) \dots \widehat{i} \dots 2 \ \ 1.
\end{align*}
The latter situation occurs if and only if $n+1-(j-1) > i$, or, equivalently, $n > i+j -2$.

\begin{figure}[htp]
\centering
\begin{tikzpicture}[scale=.6]

\begin{scope}
\node at (0,-5) (t1) {$\tau_0$};
\node at (-2,-2.5) (t2) {$w^{(1)}$};
\node at (-2,0) (t4) {$w^{(2)}$};
\node at (2,-2.5) (t3) {$w_{(1)}$};
\node at (2,0) (t5) {$w_{(2)}$};
\node at (-2,2.5) (t6) {$w^{(m-1)}$};
\node at (2,2.5) (t7) {$w_{(m-1)}$};
\node at (-2,5) (t8) {$w^{(m)}$};
\node at (2,5) (t9) {$w_{(m)}$};
\node at (-2,6.5) (t10) {};
\node at (2,6.5) (t11) {};
\draw[-, thick] (t1) to (t2);
\draw[-, thick] (t1) to (t3);
\draw[-, thick] (t2) to (t4);
\draw[-, thick] (t3) to (t5);
\draw[-, thick] (t2) to (t5);
\draw[dotted, ultra thick] (t5) to (t7);
\draw[dotted, ultra thick] (t4) to (t6);
\draw[dotted, ultra thick] (t8) to (t10);
\draw[dotted, ultra thick] (t9) to (t11);
\draw[-, thick] (t6) to (t8);
\draw[-, thick] (t7) to (t9);
\draw[-, thick] (t6) to (t9);
\end{scope}
\end{tikzpicture}
\caption{The Bruhat-Chevalley order on $X_{I,J}^+$ for $I^c=\{i\}$, $J^c=\{1,j\}$,
where $2\leq i < j$.}
\label{F:Anii22}
\end{figure}
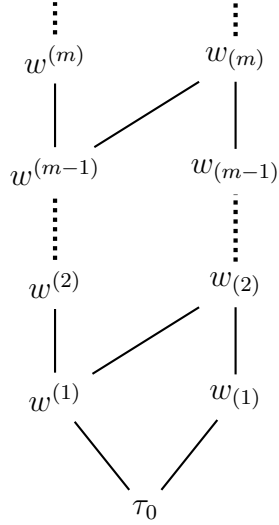

This finishes the proof of Case 3.
By combining with Stembridge's results on 
tight Bruhat order and the classification of the 
spherical double flag varieties for $SL(n+1)$, 
we now have a proof of the following result.

\begin{Theorem}\label{T:Type A G orbits}
Let $G$ denote $SL(n+1)$ and let $P_I$ and $P_J$ 
be two standard parabolic subgroups of $G$. If $G/P_I \times G/P_J$
is a spherical double flag variety, then the inclusion poset $(Z,\subseteq)$ of $G$-orbit closures 
is either a chain or one of the ``ladder lattices'' as depicted in Figure~\ref{F:ladder}.
More precisely, we have  

\begin{enumerate}

\item if $|I^c|=|J^c|=1$, then $Z$ is isomorphic to a chain;

\item if $|I^c|=1$ and $J^c = \{ s_j, s_{j+1}\}$ ($1\leq j \leq n-1$), 
then $Z$ is isomorphic to a chain;

\item if 
$I^c\in \{ \{s_2\}, \{s_{n-1}\}\}$ and $J^c=\{s_p,s_q\}$ ($1<p <p+1<q<n$),
then the Hasse diagram of $Z$ is as 
in Figure~\ref{F:stretched diamond}; 

\item if 
$I^c \in \{ \{s_1\}, \{s_n\}\}$ and $|J^c| \geq 2$ 
(but $J^c\neq \{ s_j,s_{j+1} \}$ $(1\leq j \leq n-1 )$),
then $Z$ is isomorphic to a chain;

\item if
$I^c \in \{ \{s_2\},\dots,\{s_{n-1}\}\}$, and $J^c=\{s_1,s_j\}$ or $J^c=\{s_j,s_n\}$ with $2< j < n-1$,
then   
\begin{enumerate}
\item the Hasse diagram of $Z$ is as 
in (A) in Figure~\ref{F:ladder} for $2 < j \leq i$ and $i+j-2 < n$; 
\item the Hasse diagram of $Z$ is as 
in (B) in Figure~\ref{F:ladder} for $2 < j \leq i$ and $i+j-2 \geq n$; 
\item the Hasse diagram of $Z$ is as 
in (C) in Figure~\ref{F:ladder} for $j > i \geq 2$ and $i+j-2 < n$; 
\item the Hasse diagram of $Z$ is as 
in (D) in Figure~\ref{F:ladder} for $j > i \geq 2$ and $i+j-2 \geq n$.
\end{enumerate}
\end{enumerate}

\end{Theorem}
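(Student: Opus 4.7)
The plan is to reduce the theorem to a purely combinatorial statement about the Bruhat-Chevalley order on the set of maximal-length double coset representatives $X_{I,J}^+$, and then to check it case-by-case on the list of pairs $(I,J)$ classifying spherical double flag varieties for $SL(n+1)$. The reduction is the combination of Lemma~\ref{L:poset isom} (which turns $G$-orbits on $G/P_I\times G/P_J$ into $P_J$-orbits on $G/P_I$), the identification of $P_J$-orbits on $G/P_I$ with $(W_I,W_J)$-double cosets in $W$ from Subsection~\ref{SS:3}, and the statement that the geometric inclusion order on orbit closures agrees with the Bruhat-Chevalley order on $X_{I,J}^+$. Once this reduction is in place, the theorem becomes a statement about Hasse diagrams of subposets of $S_{n+1}$.

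The list of cases in the theorem matches exactly the cases remaining after Stembridge's tight-quotient classification recalled at the end of Subsection~\ref{SS:5}. Parts (1) and (2) fall inside that classification: part (1) is Littelmann's chain result, which I would invoke directly via the end of Subsection~\ref{SS:4}, and part (2) corresponds to $J^c=\{s_j,s_{j+1}\}$, which is in the $A_n$ tight list, so Proposition~\ref{P:BC2} together with \cite[Theorem 2.3]{Stembridge05} forces $X_{I,J}^-$, hence (by Remark~\ref{R:opposite}) also $X_{I,J}^+$, to be a chain. Parts (3), (4), (5) correspond precisely to the non-tight cases labelled Case~1, Case~2, Case~3 in the main text, so the strategy is to invoke each of those analyses.

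For Case~1 (part (3)) and Case~3 (part (5)), the approach is to work in one-line notation in $S_{n+1}$. The description of $X_{I,J}^+$ via right descent sets of $w$ and $w^{-1}$ (the second displayed formula in Subsection~\ref{SS:3}) forces every $w\in X_{I,J}^+$ to decompose as a concatenation of decreasing blocks whose lengths are determined by $J^c$, with the extra constraint that $i$ precedes $i+1$ in $w$ when $I^c=\{i\}$. I would then identify the minimum $\tau_0$ by a direct rank comparison between the two natural candidates (as with the two formulas in Case~1), and climb up the poset by listing all admissible covers; admissibility is severely restricted because, by \cite[Theorem 1.2(ii)]{Curtis85}, $X_{I,J}^+$ contains only the maximal-length representatives of each double coset, so a cover must both increase Bruhat length and land on a distinct coset representative. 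This is exactly what forces the ``ladder'' shape of Figures~\ref{F:stretched diamond}, \ref{F:Anii2}, \ref{F:Anii22}. Case~2 (part (4)) is easier: the condition $I^c\in\{\{s_1\},\{s_n\}\}$ forces $w^{-1}$ to have at most one right ascent, at a prescribed position, and the set of such permutations is immediately seen to be a chain; inverting is a Bruhat-order automorphism, so $X_{I,J}^+$ is a chain.

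The main obstacle is the bookkeeping in part (5), where one must verify that at every rung of the ladder the number of covers is exactly two, identify the two covering elements explicitly, and then decide whether the two ascending chains $w^{(p)}$ and $w_{(p)}$ meet first at $w_0$ or at an intermediate element $w^{(m+1)}=w_{(m+1)}$. The split between sub-cases (a)--(d) corresponds to the pair of inequalities $j\lessgtr i$ (which controls the shape of $\tau_0$ via formulas (\ref{A:bottom element 1}) versus (\ref{A:bottom element 2})) and $i+j-2\lessgtr n$ (which controls whether the merge happens below $w_0$); checking that these are the only two dichotomies governing the shape of the ladder is the delicate part. Once this is carried out, the four sub-cases of part (5) read off directly from Figures~\ref{F:Anii2} and \ref{F:Anii22}, completing the proof.
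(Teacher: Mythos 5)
Your proposal follows essentially the same route as the paper: the reduction via Lemma~\ref{L:poset isom} and Subsection~\ref{SS:3} to the Bruhat--Chevalley order on $X_{I,J}^+$, the disposal of the chain cases through Stembridge's tight-quotient classification, and the case-by-case climb through covers (constrained by the maximal-representative characterization of \cite[Theorem 1.2(ii)]{Curtis85}) to exhibit the stretched diamond and ladder shapes, including the same two dichotomies $j\lessgtr i$ and $i+j-2\lessgtr n$ governing part (5). This is the paper's own argument in outline, and it is correct.
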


\begin{figure}[htp]
\centering
\begin{tikzpicture}[scale=.6]

\begin{scope}[xshift= -9.5cm]
\node at (0,-9) (t00) {(A)};
\node at (0,-7.5) (t0) {$\tau_0$};
\node at (0,-5) (t1) {$w$};
\node at (-2,-2.5) (t2) {$w^{(2)}$};
\node at (-2,0) (t4) {$w^{(3)}$};
\node at (2,-2.5) (t3) {$w_{(2)}$};
\node at (2,0) (t5) {$w_{(3)}$};
\node at (-2,2.5) (t6) {$w^{(m-1)}$};
\node at (2,2.5) (t7) {$w_{(m-1)}$};
\node at (-2,5) (t8) {$w^{(m)}$};
\node at (2,5) (t9) {$w_{(m)}$};
\node at (0,7.5) (t10) {$w_{(m+1)}$};
\node at (0,10) (t11) {$w_0$};
\draw[-, thick] (t0) to (t1);
\draw[-, thick] (t1) to (t2);
\draw[-, thick] (t1) to (t3);
\draw[-, thick] (t2) to (t4);
\draw[-, thick] (t3) to (t5);
\draw[-, thick] (t2) to (t5);
\draw[dotted, ultra thick] (t5) to (t7);
\draw[dotted, ultra thick] (t4) to (t6);
\draw[-, thick] (t6) to (t8);
\draw[-, thick] (t7) to (t9);
\draw[-, thick] (t6) to (t9);
\draw[-, thick] (t8) to (t10);
\draw[-, thick] (t9) to (t10); 
\draw[-, thick] (t10) to (t11);
\end{scope}

\begin{scope}[xshift=-3.1cm]
\node at (0,-9) (t00) {(B)};
\node at (0,-7.5) (t0) {$\tau_0$};
\node at (0,-5) (t1) {$w$};
\node at (-2,-2.5) (t2) {$w^{(2)}$};
\node at (-2,0) (t4) {$w^{(3)}$};
\node at (2,-2.5) (t3) {$w_{(2)}$};
\node at (2,0) (t5) {$w_{(3)}$};
\node at (-2,2.5) (t6) {$w^{(m-1)}$};
\node at (2,2.5) (t7) {$w_{(m-1)}$};
\node at (-2,5) (t8) {$w^{(m)}$};
\node at (2,5) (t9) {$w_{(m)}$};
\node at (0,7.5) (t10) {$w_{0}$};
\draw[-, thick] (t0) to (t1);
\draw[-, thick] (t1) to (t2);
\draw[-, thick] (t1) to (t3);
\draw[-, thick] (t2) to (t4);
\draw[-, thick] (t3) to (t5);
\draw[-, thick] (t2) to (t5);
\draw[dotted, ultra thick] (t5) to (t7);
\draw[dotted, ultra thick] (t4) to (t6);
\draw[-, thick] (t6) to (t8);
\draw[-, thick] (t7) to (t9);
\draw[-, thick] (t6) to (t9);
\draw[-, thick] (t8) to (t10);
\draw[-, thick] (t9) to (t10); 
\end{scope}

\begin{scope}[xshift=3.1cm]
\node at (0,-9) (t00) {(C)};
\node at (0,-5) (t1) {$\tau_0$};
\node at (-2,-2.5) (t2) {$w^{(2)}$};
\node at (-2,0) (t4) {$w^{(3)}$};
\node at (2,-2.5) (t3) {$w_{(2)}$};
\node at (2,0) (t5) {$w_{(3)}$};
\node at (-2,2.5) (t6) {$w^{(m-1)}$};
\node at (2,2.5) (t7) {$w_{(m-1)}$};
\node at (-2,5) (t8) {$w^{(m)}$};
\node at (2,5) (t9) {$w_{(m)}$};
\node at (0,7.5) (t10) {$w_{(m+1)}$};
\node at (0,10) (t11) {$w_0$};
\draw[-, thick] (t1) to (t2);
\draw[-, thick] (t1) to (t3);
\draw[-, thick] (t2) to (t4);
\draw[-, thick] (t3) to (t5);
\draw[-, thick] (t2) to (t5);
\draw[dotted, ultra thick] (t5) to (t7);
\draw[dotted, ultra thick] (t4) to (t6);
\draw[-, thick] (t6) to (t8);
\draw[-, thick] (t7) to (t9);
\draw[-, thick] (t6) to (t9);
\draw[-, thick] (t8) to (t10);
\draw[-, thick] (t9) to (t10); 
\draw[-, thick] (t10) to (t11);
\end{scope}

\begin{scope}[xshift=9.5cm]
\node at (0,-9) (t00) {(D)};
\node at (0,-5) (t1) {$\tau_0$};
\node at (-2,-2.5) (t2) {$w^{(2)}$};
\node at (-2,0) (t4) {$w^{(3)}$};
\node at (2,-2.5) (t3) {$w_{(2)}$};
\node at (2,0) (t5) {$w_{(3)}$};
\node at (-2,2.5) (t6) {$w^{(m-1)}$};
\node at (2,2.5) (t7) {$w_{(m-1)}$};
\node at (-2,5) (t8) {$w^{(m)}$};
\node at (2,5) (t9) {$w_{(m)}$};
\node at (0,7.5) (t10) {$w_{0}$};
\draw[-, thick] (t1) to (t2);
\draw[-, thick] (t1) to (t3);
\draw[-, thick] (t2) to (t4);
\draw[-, thick] (t3) to (t5);
\draw[-, thick] (t2) to (t5);
\draw[dotted, ultra thick] (t5) to (t7);
\draw[dotted, ultra thick] (t4) to (t6);
\draw[-, thick] (t6) to (t8);
\draw[-, thick] (t7) to (t9);
\draw[-, thick] (t6) to (t9);
\draw[-, thick] (t8) to (t10);
\draw[-, thick] (t9) to (t10); 
\end{scope}
\end{tikzpicture}
\caption{The ladder posets.}
\label{F:ladder}
\end{figure}
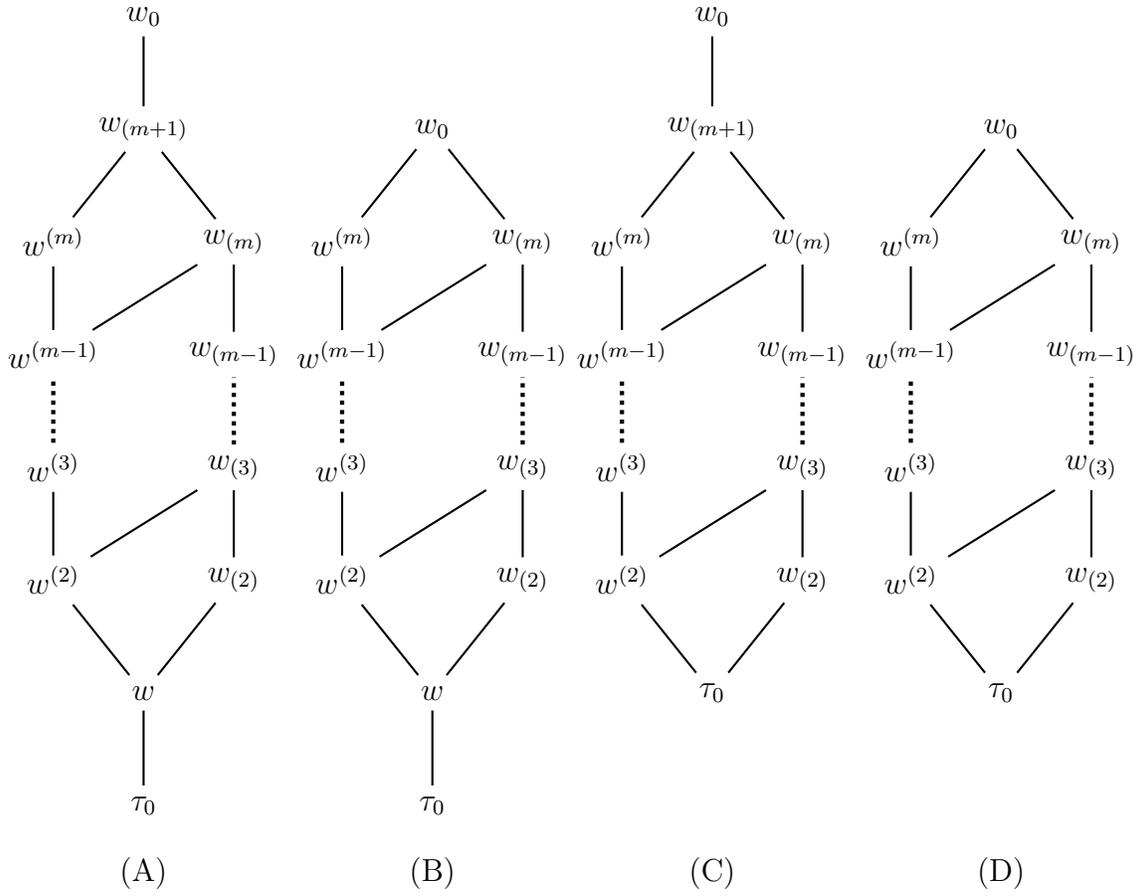

\vspace{.5cm}

\textbf{Acknowledgements.} 
We are grateful to John Stembridge for several reasons, 
including for his Maple codes and 
for answering our questions about his work. 
We thank Bill Graham for bringing this problem to our attention.
We thank Roman Avdeev for his comments on the first version of this manuscript. 
Finally, we thank the referee for very careful reading of our paper and for the constructive suggestions.

\bibliography{References}

\begin{thebibliography}{10}

\bibitem{AP}
R.~S. Avdeev and A.~V. Petukhov.
\newblock Spherical actions on flag varieties.
\newblock {\em Mat. Sb.}, 205(9):4--48, 2014.

\bibitem{Borel}
A.~Borel.
\newblock {\em Linear algebraic groups}, volume 126 of {\em Graduate Texts in
  Mathematics}.
\newblock Springer-Verlag, New York, second edition, 1991.

\bibitem{Curtis85}
C.~W. Curtis.
\newblock On {L}usztig's isomorphism theorem for {H}ecke algebras.
\newblock {\em J. Algebra}, 92(2):348--365, 1985.

\bibitem{Heetal}
X.~He, K.~Nishiyama, H.~Ochiai, and Y.~Oshima.
\newblock On orbits in double flag varieties for symmetric pairs.
\newblock {\em Transform. Groups}, 18(4):1091--1136, 2013.

\bibitem{HohlwegSkandera}
C.~Hohlweg and Skandera M.
\newblock A note on {B}ruhat order and double coset representatives.
\newblock https://arxiv.org/abs/math/0511611, 2005.

\bibitem{Littelmann}
P.~Littelmann.
\newblock On spherical double cones.
\newblock {\em J. Algebra}, 166(1):142--157, 1994.

\bibitem{MWZ1}
P.~Magyar, J.~Weyman, and A.~Zelevinsky.
\newblock Multiple flag varieties of finite type.
\newblock {\em Adv. Math.}, 141(1):97--118, 1999.

\bibitem{MWZ2}
P.~Magyar, J.~Weyman, and A.~Zelevinsky.
\newblock Symplectic multiple flag varieties of finite type.
\newblock {\em J. Algebra}, 230(1):245--265, 2000.

\bibitem{Niemann}
B.~Niemann.
\newblock Spherical affine cones for maximal reductive subgroups in exceptional
  cases.
\newblock {\em PhD. Thesis, Universit{\"a}t zu K{\"o}ln}, 2013.

\bibitem{Panyushev99}
D.~I. Panyushev.
\newblock Complexity and rank of actions in invariant theory.
\newblock {\em J. Math. Sci. (New York)}, 95(1):1925--1985, 1999.
\newblock Algebraic geometry, 8.

\bibitem{Stembridge02}
J.~R. Stembridge.
\newblock A weighted enumeration of maximal chains in the {B}ruhat order.
\newblock {\em J. Algebraic Combin.}, 15(3):291--301, 2002.

\bibitem{Stembridge}
J.~R. Stembridge.
\newblock Multiplicity-free products and restrictions of {W}eyl characters.
\newblock {\em Represent. Theory}, 7:404--439, 2003.

\bibitem{Stembridge05}
J.~R. Stembridge.
\newblock Tight quotients and double quotients in the {B}ruhat order.
\newblock {\em Electron. J. Combin.}, 11(2):Research Paper 14, 41, 2004/06.

\bibitem{VanPruijssen}
M.~van Pruijssen.
\newblock Multiplicity free induced representations and orthogonal polynomials.
\newblock {\em International Math Research Notices}, 2017.
\newblock DOI: 10.1093/imrn/rnw295.

\bibitem{PopovVinberg}
\`E.~B. Vinberg and V.~L. Popov.
\newblock A certain class of quasihomogeneous affine varieties.
\newblock {\em Izv. Akad. Nauk SSSR Ser. Mat.}, 36:749--764, 1972.

\end{thebibliography}
\bibliographystyle{plain}

\end{document}